\numberwithin{equation}{section}
\newcommand{\beq}{\begin{equation}}
\newcommand{\eeq}{\end{equation}}
\newcommand{\beqa}{\begin{eqnarray}}
\newcommand{\eeqa}{\end{eqnarray}}
\newcommand{\beqan}{\begin{eqnarray*}}
\newcommand{\eenan}{\end{eqnarray*}}
\def\Cal{\mathcal}
\newcommand{\Dslash}{{\slash{\kern -0.5em}\partial}}
\newcommand{\Aslash}{{\slash{\kern -0.5em}A}}
\def\sqr#1#2{{\vcenter{\hrule height.#2pt
     \hbox{\vrule width.#2pt height#1pt \kern#1pt
        \vrule width.#2pt}
     \hrule height.#2pt}}}
\def\thinspace{\kern .16667em}
\def\xp{x_{{\kern -.2em}_\perp}}
\def\subp{_{{\kern -.2em}_\perp}}
\newcommand*{\defeq}{\mathrel{\rlap{%
                     \raisebox{0.3ex}{$\m@th\cdot$}}%
                     \raisebox{-0.3ex}{$\m@th\cdot$}}%
                     =}
\newcommand{\dom}{\mathrm{dom}\,}
\newcommand{\Nat}{{\mathbb N}}
\newcommand\Star{{{}^*}\!}
\newcommand{\Pow}{\Cal P}
\newcommand{\Powfin}{{\Cal P_\omega}}
\newcommand{\Powcof}{\overline{\Powfin}}
\newcommand{\Powtop}{{\Pow}_*}
\newcommand{\setbld}[2]{\{#1 \,|\, #2 \}}
\newcommand{\ran}{\mathop{\mathrm{ran}}}
\newcommand{\Pos}{\mathsf{PoSet}}
\newcommand{\msl}{\wedge\mathsf{SemiLatt}}
\newcommand{\filt}{{\Cal F}}
\newcommand{\Flt}[1]{\mathrm{Flt}(#1)}
\newcommand{\card}[1]{|#1|}
\newcommand{\X}{\Cal X}
\newcommand{\con}{\mathrm{con}\,}
\newcommand{\cf}[1]{\mathrm{cf}(#1)}
\newcommand{\pcell}[1]{\mathrm{cell}^*(#1)}
\theoremstyle{plain}
\newtheorem{thm}{Theorem}[section]
\newtheorem{lem}[thm]{Lemma}
\newtheorem*{lem*}{Lemma}
\newtheorem*{cor*}{Corollary}
\newtheorem*{thm*}{Theorem}
\newtheorem{prop}[thm]{Proposition}
\theoremstyle{definition}
\newtheorem{defn}{Definition}[section]
\theoremstyle{definition}
\newtheorem{notn}{Notation}[section]
\theoremstyle{remark}
\newtheorem{rem}{Remark}[section]
\let\origmaketitle\maketitle
\def\maketitle{
  \begingroup
  \def\uppercasenonmath##1{} % this disables uppercasing title
  \let\MakeUppercase\relax % this disables uppercasing authors
  \origmaketitle
  \endgroup
}
\begin{document}

\title{Good ultrafilters and highly saturated models: a friendly explanation}
\author{Paul~E.~Lammert}
\email{pel1@psu.edu}
\address{Department of Physics, Pennsylvania State University, 
University Park, PA 16802}
%%%%%%%%%%%%%%%%%%%%%%%%%%%%%%%%%%%%%%
%\date{\today}
\date{Dec. 6, 2015}
\begin{abstract}
Highly saturated models are a fundamental part of the model-theoretic
machinery of nonstandard analysis. Of the two methods for producing them,
ultrapowers constructed with the aid of $\kappa^+$-good ultrafilters
seems by far the less popular. Motivated by the hypothesis that this is
partly due to the standard exposition being somewhat dense, a presentation
is given which is designed to be easier to digest.
\end{abstract}
\maketitle

\section{Should you read this?}

The most difficult piece of the model-theoretic machinery underlying nonstandard analysis 
(NSA) is certainly the construction of {\it polysaturated} models
(more generally, $\kappa$-saturated for $\kappa > \omega^+$).
There are two main approaches:
the ultralimit method and the good ultrafilter method.
Stroyan and Luxemburg\cite{Stroyan+Luxemburg} and 
the recent volume\cite{Loeb+Wolff} edited by Loeb and Wolff contain detailed
proofs using ultralimits. This method has the advantage that at least the general
idea seems clear enough. Lindstr{\o}m's lectures\cite{Lindstrom85} is very
unusual in adopting the good ultrafilter method, which allows
use of plain ultrapower construction by relying on special properties of the ultrafilter. 
This facilitates using ultrapowers to directly reason about nonstandard universes;
the ultralimit construction is too unwieldy for that.
Lindstr{\o}m essentially copies the proof of Chang \& Keisler\cite{Chang+Keisler} 
(itself a fleshing-out of Kunen\cite{Kunen-72}) of the existence of 
$\kappa^+$-saturated ultraproducts,
describing it as ``an interesting, but quite technical piece of infinitary 
combinatorics''. To me, ``opaque'' seemed an apt description.
The following attempt at clarification is offered firstly
to others who are not model theorists or set theorists, but are 
interested in using nonstandard analysis (or model theory more generally) 
in their home fields. 
In mathematical essence, it follows \S 6.1 of Chang \& Keisler.
The difference is primarily one of presentation.

Here is a brief description of the body of the paper, highlighting novel elements.
\S \ref{sec:ultraproducts} and \ref{sec:saturation} very briefly recall 
ultraproducts and saturation.
Rather than just showing that the definition of {\it good ultrafilter} works,
\S \ref{sec:sufficient} follows some intution vaguely inspired by modal logic
to deduce a condition on an ultrafilter which endows an untraproduct with
$\kappa$-saturation.
The condition is reformulated and abstracted into an order-theoretic definition 
of {\it good} in \S \ref{sec:goodness} in \S \ref{sec:goodness} and used in the
first main Thm. \ref{thm:kappa+-saturated}.
That may seem to be gratuitous generalization, but it has some naturality, 
and the domain side of the abstract version is maintained through the 
second main Thm. \ref{thm:good-ultrafilter}, clarifying a little what is involved,
with no cost in complication.
In \S \ref{sec:construction}, the construction of a $\kappa^+$-good
ultrafilter on $\kappa$ is given, structured as a transfinite recursion with
some complications. We add sets step-by-step to a filter to produce
an ultrafilter, simultaneously building in goodness for a set of functions. 
(Which set is determined by the finished ultrafilter!) 
The recursion also requires a supply of a certain auxiliary raw 
material --- partitions of $\kappa$ with special properties, which are simply
consumed, not built into the final product in any meaningful sense. 
The construction of this supply, which is completely independent of everything else,
is segregated in \S \ref{sec:stock-of-partitions}.

\section{What's good about a good ultrafilter?}
\label{sec:what's-good}

\begin{notn}
Ordinals are denoted by greek letters $\alpha,\ldots,\mu$, but $\kappa$ will
always be an infinite cardinal and $\lambda$ and $\mu$ usually limit ordinals.
As is commonly done, we identify infinite cardinal numbers with initial ordinals.
To avert potential confusion, ordinal comparisons are denoted by curly symbols 
like `$\prec$', cardinal comparisons by `$<$', etc. 
The successor cardinal to $\kappa$ is $\kappa^+$.
$\Powfin(X)$ consists of the finite, and $\Powcof(X)$ of the cofinite, subsets of $X$.
$\Powtop(X)$ comprises the subsets with the same cardinality as $X$ itself 
({\it large} subsets).
The cardinality of a set $X$ is denoted $\card{X}$, and the underlying universe
of an ${\Cal L}$-structure ${\Cal B}$ by $|{\Cal B}|$.
\end{notn}
\subsection{Ultraproduct models\label{sec:ultraproducts}}

Let ${\Cal L}$ be a formal language, $({\Cal A}_i | {i\in I} )$ a family of
${\Cal L}$-structures indexed by the nonempty set $I$, and $\Cal U$ an ultrafilter on $I$.
These are the raw materials for the mod-${\Cal U}$ ultraproduct 
$\prod_{\Cal U} {\Cal A}_i$.
With $|{\Cal A}_i|$ the underlying universe of ${\Cal A}_i$, the universe 
of the ultraproduct is the quotient of $\prod_{i\in I} |{\Cal A}_i|$ 
by the equivalence relation $\simeq_{\Cal U}$, defined by,
$(a_i)_{i\in I} \simeq_{\Cal U} (b_i)_{i\in I}$ iff 
\hbox{$\setbld{i\in I}{a_i = b_i} \in {\Cal U}$.} 
The equivalence class of $a$ is denoted $[a]$.
The interpretation of ${\Cal L}$ in the ultraproduct is defined by:
$\prod_{\Cal U} {\Cal A}_i \models \phi$ iff
\hbox{$\setbld{i\in I}{{\Cal A}_i \models \phi} \in {\Cal U}$.}

\subsection{Saturation}
\label{sec:saturation}

A set of formulas $\Sigma(x)$ with one free variable is 
satisfiable in an ${\Cal L}$-structure ${\Cal B}$ if there is 
$b\in |{\Cal B}|$ such that ${\Cal B}\models \Sigma[b]$.
It is finitely satisfiable if every finite subset is satisfiable.
Given a subset $X \subset |{\Cal B}|$, the enrichment of the language
obtained by adding names for all the elements of $X$ is denoted ${\Cal L}(X)$, 
and the corresponding structure by $({\Cal B},a)_{a\in X}$ (`$a$' being interpreted by $a$!).
${\Cal B}$ is $\kappa$-saturated if for any $X\subset |{\Cal B}|$ with
$\card{X} < \kappa$, then a set of formulas $\Sigma(x) \subset {\Cal L}(X)$ with one 
free variable is satisfiable in $({\Cal B},a)_{a\in X}$ as soon as it is finitely satisfiable.

% This description puts the subset $X$ first, but we might as well have started with
% the constants. The ${\Cal L}$-structure ${\Cal B}$ is $\kappa$-saturated if:
% given an expansion ${\Cal L}(C)$ of ${\Cal L}$ by fewer than $\kappa$ constants $C$
% and an interpretation of the constants in ${\Cal B}$, 
% any set $\Sigma(x) \subset {\Cal L}(C)$ is satisfiable as soon as it is finitely satisfiable.
% 
% 
\begin{rem}
The Robinson-Zakon constructions in NSA invlve a bit more
than the generic ultraproducts appearing here:
an ultrapower is taken of the superstructure 
%$V(\X) \defeq
$\cup_{n\prec\omega} V_n(\X)$ over a set 
$\X$ [\hbox{$V_0(\X) \defeq \X$}, \hbox{$V_{n+1}(\X) \defeq V_{n}(\X)\cup\Pow(V_n(\X))$}];
elements of unbounded superstructure rank are ejected; and the result is embedded into 
the superstructure over $\Star{\X}$ via Mostowski collapse. 
The middle step results in a slightly different notion of saturation. 
Elements of bounded rank in the ultrapower are (identified, via Mostowski collapse, with) 
the {\it internal} entities.
In the previous description we should add requirements that $X$ consist of internal entities and
that for some $n$, $\Sigma(x)$ is finitely satisfiable in $V_n$.
The difference essentially boils down to just making sure that the ranks are bounded. 
Keeping that in mind, it is not difficult to specialize to the NSA setting.
\end{rem}

\subsection{Sufficient conditions on ${\Cal U}$ for saturation\label{sec:sufficient}}

Suppose the ultrafilter ${\Cal U}$ guarantees the condition 
(S): Whenever $\Sigma(x)$ with $|\Sigma| < \kappa$ is finitely satisfiable in 
a mod-${\Cal U}$ ultraproduct $\prod_{\Cal U} {\Cal A}_i$, then it is satisfiable.
Then, any mod-${\Cal U}$ ultraproduct interpreting a language with $\card{{\Cal L}} < \kappa$
is $\kappa$-saturated. This follows because for $X$ a subset of the ultraproduct with
$\card{X} < \kappa$, ${\Cal L}(X)$ is another language with cardinality less than
$\kappa$ and condition S immediately delivers what is required.
Thus, we are going to deduce a property of ${\Cal U}$ which guarantees condition S.

To that end, suppose $\Sigma(x)$ with $|\Sigma| < \kappa$ is finitely satisfiable
in $\prod_{\Cal U} {\Cal A}_i$. This can be phrased in terms of a satisfiability 
function
\begin{equation}
p:\Powfin(\Sigma) \rightarrow {\Cal U}  ::
\Theta \mapsto \setbld{i\in I}{ {\Cal A}_i \models \exists x (\wedge \Theta)}.
\end{equation}
$p(\Theta)$ is the set of indices of models in which the finite set of
formulas $\Theta$ is satisfiable.
A binary relation-style notation is also convenient: `$i\, p\, \Theta$'
is synonymous with `$i\in p(\Theta)$'.
Clearly, $p$ satisfies the condition
\begin{equation}
%p\text{ possibility }\equiv\;
\Theta \subseteq \Theta^\prime \Rightarrow p(\Theta) \supseteq p(\Theta^\prime),
\;\text{ equivalently }\;
i \,p\, (\Theta \cup \Theta^\prime) \Rightarrow 
i \,p\, \Theta
\text{ \& } 
i \,p\, \Theta^\prime.
\tag{Psb}
\label{eq:possibility}
\end{equation}
defining the notion of a {\it possibility function} (or binary relation).
% \hbox{$p:\Powfin(\Sigma) \rightarrow {\Cal U}$}

With $p$ denoting the possibility relation associated with $\Sigma$ as above,
$\Sigma$ is satisfiable if there exists $(\Phi_i \in \Powfin(\Sigma))_{i\in I}$ 
{\it actualizing} $p$ in the sense that
\begin{equation}
\text{(a): } i\,p\,\Phi_i,\text{ and }
\text{(b): } 
\text{for all } \theta\in\Sigma,\, 
\setbld{i\in I}{\theta \in \Phi_i} \in {\Cal U}.
\nonumber
\end{equation}
This is because, according to (a), $p$ certifies the existence of 
an element $[(a_i \,|\,i\in I)]$ of the ultraproduct
such that ${\Cal A}_i \models \Phi_i[a_i]$.
Clause (b) then shows that $[(a_i\,|\, i\in I)]$ satisfies every formula in $\Sigma$. 
As linguistic aspects of $\Sigma$ are untouched here, 
we might as well ask for criteria guaranteeing that
{every} possibility function on any set $X$, $\card{X} < \kappa$,
has an actualization.

Now, by basic properties of ultrafilters, clause (b) in the definition implies 
that \hbox{$\setbld{i\in I}{\Theta \subseteq \Phi_i} \in {\Cal U}$}
for every $\Theta\in\Powfin(X)$.
Thus, defining $p_\Phi$ by $i\, p_\Phi \, \Theta$ 
iff $\Theta \subseteq \Phi_i \in \Powfin(X)$, $p_\Phi$ is a
possibility relation in its own right.
It satisfies the possibility condition (\ref{eq:possibility}), and the 
strenghtened form of clause (b) shows that 
\hbox{$p_\Phi: \Powfin(X) \rightarrow {\Cal U}$}. 
Such a possibility relation $q=p_\Phi$ has some special properties:
\begin{equation}
\tag{Con}
\label{eq:consistency}
q \text { consistent: }\quad
i \,q\, (\Theta \cup \Theta^\prime) \Leftarrow
i \,q\, \Theta
\text{ \& } 
i \,q\, \Theta^\prime,
\end{equation}
which is inverse to the implication in (\ref{eq:possibility}), and 
\begin{equation}
q\text{ locally finite: } \quad
\text{ for all } i\in I,\;
\sup \setbld{\card{\Theta}}{i\,q\,\Theta} < \infty.
\tag{Lfn}
\label{eq:loc-fin}  
\end{equation}
(\ref{eq:consistency}) is equivalent to the statement that
$q^{-1} i = \setbld{\Theta \in \Powfin(X)}{i\, q \,\Theta}$
is closed under finite union.
Thus, if $q$ satisfies (\ref{eq:possibility},\ref{eq:consistency},\ref{eq:loc-fin}), 
then $\bigcup q^{-1} i$ is finite, hence itself a member of $q^{-1} i$,
for all $i\in I$. And, in that case $\bigcup q^{-1} i$ is an actualization of $q$.
So, a locally finite consistent possibility relation has an actualization.

Finally, suppose that $p^\prime \le p$ 
[meaning $p^\prime(\Theta) \subseteq p(\Theta)$ for $\Theta\in\Powfin(X)$].
Then, an actualization of $p^\prime$ is also an actualization of $p$ 
[only clause (a) sees $p$], and $p^\prime$ is locally finite if $p$ is.
Thus, if for every possibility relation $p$, there were two others,
$Tp \le p$ locally finite, and $Mp \le p$ consistent, 
then $MTp$ satisfies 
(\ref{eq:possibility},\ref{eq:consistency},\ref{eq:loc-fin}), hence
has an actualization which is also an actualization of $p$.
Summing up, ${\Cal U}$ guarantees saturation if the operators $M$ and $T$ exist.

$T$, at least, is quickly dispatched.
\begin{lem}
If the ultrafilter ${\Cal U}$ on $I$ is countably incomplete, then for every
possibility relation $p$, there is a locally finite possibility relation
$q\le p$.  
\end{lem}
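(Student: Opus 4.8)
The plan is to use countable incompleteness of ${\Cal U}$ to attach to each index $i\in I$ a finite ``capacity'' $n(i)$, and then to let $q$ permit $i\,q\,\Theta$ only when $\card{\Theta}\le n(i)$. Local finiteness is then true by design; the real content is to tune the capacities so that $q(\Theta)$ still lies in ${\Cal U}$ for every $\Theta$ (otherwise the statement would be trivial and countable incompleteness irrelevant).

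First I would rephrase countable incompleteness in the convenient form: there is a $\subseteq$-decreasing sequence $I = I_0 \supseteq I_1 \supseteq I_2 \supseteq \cdots$ with each $I_n\in{\Cal U}$ and $\bigcap_{n\prec\omega} I_n = \emptyset$. One gets this by taking a countable subfamily of ${\Cal U}$ whose intersection $J$ lies outside ${\Cal U}$, passing to the chain of its finite sub-intersections (still in ${\Cal U}$, still with intersection $J$), and then intersecting each term of that chain with the set $I\setminus J\in{\Cal U}$; the resulting chain stays inside ${\Cal U}$ and now has empty intersection. For each $i\in I$ set $n(i)\defeq\max\setbld{n\prec\omega}{i\in I_n}$: this is well-defined because $i\in I_0$ makes the set nonempty while $i\notin\bigcap_n I_n$ makes it bounded. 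Since the chain is decreasing, $\setbld{i\in I}{n(i)\ge m} = I_m$ for every $m$.

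I would then define $q$ on $\Powfin(X)$ by declaring $i\,q\,\Theta$ to hold iff $i\,p\,\Theta$ and $\card{\Theta}\le n(i)$, equivalently $q(\Theta) = p(\Theta)\cap I_{\card{\Theta}}$, and check four routine points. (i) $q$ takes values in ${\Cal U}$: for each $\Theta$, $q(\Theta)$ is the intersection of the two ${\Cal U}$-members $p(\Theta)$ and $I_{\card{\Theta}}$. (ii) $q$ is a possibility relation: if $\Theta\subseteq\Theta'$ and $i\,q\,\Theta'$, then $i\,p\,\Theta$ by (Psb) for $p$ and $\card{\Theta}\le\card{\Theta'}\le n(i)$, hence $i\,q\,\Theta$. (iii) $q\le p$, since $q(\Theta)\subseteq p(\Theta)$ by construction. (iv) $q$ is locally finite: $i\,q\,\Theta$ forces $\card{\Theta}\le n(i)$, so $\sup\setbld{\card{\Theta}}{i\,q\,\Theta}\le n(i)<\infty$.

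There is no genuine obstacle. The only step needing a little care is the first one: one must be sure the chain extracted from countable incompleteness has \emph{empty} intersection rather than merely an intersection that avoids ${\Cal U}$, because it is precisely that emptiness --- via the finiteness of each $n(i)$ --- that makes $q$ locally finite while each $I_m$ remaining in ${\Cal U}$ is what keeps $q$ ${\Cal U}$-valued.
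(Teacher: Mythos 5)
Your proof is correct and takes essentially the same route as the paper's: both extract a decreasing chain in ${\Cal U}$ with empty intersection, assign each index $i$ a finite capacity (your $n(i) = \max\setbld{n}{i\in I_n}$ is the paper's $N(i) = \min\setbld{n}{i\notin I_n}$ shifted by one), and define $q(\Theta) = p(\Theta)\cap I_{\card{\Theta}}$. The only difference is that you spell out in more detail why the chain can be arranged to have empty intersection and why $q$ remains ${\Cal U}$-valued, which the paper leaves implicit.
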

\begin{proof}
That ${\Cal U}$ is countably incomplete means that there exists
$\{K_m\}_{m\in \Nat} \subseteq {\Cal U}$ 
such that $I_n \defeq \cap_{m\le n} K_m \downarrow \emptyset$.
Defining $N(i) = \min\{ n \in \Nat \,|\, i \notin I_n\}$ for $i\in I$,
note that 
\begin{equation}
\setbld{i\in I}{n \le N(i)} \in {\Cal U} \text{ for all } n\in\Nat.
\label{eq:N}
\end{equation}
Now, for $p:\Powfin(X) \rightarrow {\Cal U}$ a possibility function, 
define $Tp$ by
\begin{equation}
i \, (Tp)\, \Theta \text{ iff }
i \, p\, \Theta \text{ \& } \card{\Theta} \le N(i).
\nonumber
\end{equation}
(\ref{eq:N}) shows that $Tp$ is a possibility relation, and it is
manifestly locally finite.
\end{proof}

\subsection{Goodness}
 \label{sec:goodness}

If $X$ is an infinite set, then a map $\Powfin(X) \rightarrow Y$ can equally well
be viewed as a map $\Powcof(X) \rightarrow Y$, simply by precomposing with complementation.
The correspondingly rephrased condition for an $M$-operator for ${\Cal U}$
is as follows.
Given $f:\Powcof(X) \rightarrow {\Cal U}$
satisfying $x \subseteq y \Rightarrow f(x) \subseteq f(y)$, there is 
$(Mf:\Powcof(X) \rightarrow {\Cal U}) \le f$ satisfying $Mf(x\cap y) = Mf(x) \cap Mf(y)$.
Definition \ref{def:good} below captures the lattice-theoretic essence of this.
\begin{notn}
$\Pos$ denotes the category of posets with monotone maps, and
$\msl$, the subcategory of meet-semilattices with meet-preserving (or ``multiplicative'') 
maps. [A meet-semilattice is a poset with a binary operation $\wedge$ such that 
$a\le x$ \& $a\le y$ iff $a \le x\wedge y$.]
``$f: Y\rightarrow Z \; \text{ in }{\mathsf{C}}$'' indicates that $f$ is 
in the set $\mathsf{C}(Y,Z)$ of $\mathsf{C}$ morphisms. 
A subset of $\Pow(X)$ is a poset with $\subseteq$ in the role of $\leq$,
and if closed under intersection, is also a meet-semilattice with $\cap$ 
in the role of $\wedge$.
\end{notn}

\newcommand{\Y}{\Cal Y}
\begin{defn}[good]\label{def:good}
Let $\Y$ and $\X$ be meet-semilattices.
If, for \hbox{$f:\X \rightarrow \Y$} in $\Pos$, there is 
$g:\X \rightarrow \Y \text{ in }\msl$ with $g \le f$, then $\Y$ is
{\it $f$-good} (or ``good for $f$''). 
If $\Y$ is good for every $f$ in $\Pos(\X,\Y)$, 
then $\Y$ is {\it $\X$-good}.
Finally, for $\kappa$ an infinite cardinal, $\Y$ is
{\it $\kappa$-good} if it is $\Powcof(\beta)$-good for all $\beta < \kappa$.
\end{defn}

\begin{lem}\label{lem:inheritance}
Suppose $\Y$ is $\X$-good and
${\Cal W} \stackrel{s}{\rightarrow} \X \stackrel{r}{\rightarrow} {\Cal W}$ in $\msl$, with
\hbox{$r\circ s = id_{{\Cal W}}$}. Then, $\Y$ is ${\Cal W}$-good.
\end{lem}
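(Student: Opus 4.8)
The plan is to transport the goodness problem upward along $r$ and then pull the solution back down along $s$. Concretely, given an arbitrary $f : {\Cal W} \to \Y$ in $\Pos$, I would first form the composite $f \circ r : \X \to \Y$. Since $r$ is a morphism in $\msl$ it is in particular monotone, and a composite of monotone maps is monotone, so $f \circ r$ lies in $\Pos(\X,\Y)$.

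Now I would invoke the hypothesis that $\Y$ is $\X$-good, applied to $f\circ r$: this yields a map $h : \X \to \Y$ in $\msl$ with $h \le f \circ r$ pointwise. Set $g \defeq h \circ s : {\Cal W} \to \Y$. Being a composite of $\msl$-morphisms, $g$ is again in $\msl$; this is the one place the hypothesis on $s$ (as opposed to just $r$) is used, since $g(w\wedge w') = h\big(s(w)\wedge s(w')\big) = h(s(w))\wedge h(s(w')) = g(w)\wedge g(w')$.

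It then remains only to check $g \le f$. For any $w \in {\Cal W}$,
\[
g(w) \;=\; h(s(w)) \;\le\; (f\circ r)(s(w)) \;=\; f\big(r(s(w))\big) \;=\; f(w),
\]
using $h \le f\circ r$ evaluated at $s(w)$ and then the splitting identity $r\circ s = id_{{\Cal W}}$. Hence $g:{\Cal W}\to\Y$ in $\msl$ with $g\le f$, i.e.\ $\Y$ is $f$-good; as $f\in\Pos({\Cal W},\Y)$ was arbitrary, $\Y$ is ${\Cal W}$-good.

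There is essentially no obstacle here — the content lies entirely in choosing the right composites — but the point worth flagging is the asymmetric roles of $r$ and $s$: monotonicity of $r$ suffices to keep $f\circ r$ in $\Pos$, meet-preservation of $s$ is exactly what forces $g = h\circ s$ to land in $\msl$, and $r\circ s = id$ does the final bookkeeping relating $g$ to $f$.
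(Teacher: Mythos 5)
Your proof is correct and is essentially identical to the paper's: both compose $f$ with $r$, apply $\X$-goodness to obtain a multiplicative map below $f\circ r$, and then precompose with $s$, using $r\circ s = id_{{\Cal W}}$ to conclude. The only difference is that you spell out the verification that the composites land in the right categories, which the paper leaves implicit.
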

\begin{proof}
Given $f:{\Cal W}\rightarrow \Y$ in $\Pos$, $\X$-goodness implies existence of
$g:\X\rightarrow \Y$ in $\msl$ with $g \le f\circ r$.
Then, $g\circ s \le f\circ r\circ s = f$ meets the requirement.
\end{proof}

\begin{thm}
\label{thm:kappa+-saturated}
If ${\Cal U}$ is a a $\Powcof(\kappa)$-good ultrafilter, then
any mod-${\Cal U}$ ultraproduct for a language of cardinality less than $\kappa^+$ 
is $\kappa^+$-saturated.
\end{thm}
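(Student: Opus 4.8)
The plan is to re-run, with $\kappa$ replaced throughout by $\kappa^+$, the reduction already assembled in \S\ref{sec:sufficient}. First I would reduce the theorem to the $\kappa^+$-version of condition (S): \emph{every $\Sigma(x)$ with $\card\Sigma\le\kappa$ that is finitely satisfiable in a mod-${\Cal U}$ ultraproduct is satisfiable}. Granting that, the language-enrichment argument of \S\ref{sec:sufficient} goes through verbatim — for $X$ a subset of the ultraproduct with $\card X\le\kappa$, the expansion $({\Cal B},a)_{a\in X}$ is again a mod-${\Cal U}$ ultraproduct and $\card{{\Cal L}(X)}\le\card{{\Cal L}}+\card X\le\kappa$ — and yields $\kappa^+$-saturation. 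In turn, exactly as in \S\ref{sec:sufficient}, condition (S) for $\kappa^+$ holds once every possibility relation $p:\Powfin(\Sigma)\to{\Cal U}$ with $\card\Sigma\le\kappa$ has an actualization; and that follows from the existence of operators $T$ and $M$ assigning to every such $p$ a locally finite $Tp\le p$ and a consistent $Mp\le p$. Indeed, $MTp$ (meaning $M$ applied to the possibility relation $Tp$, again a map $\Powfin(\Sigma)\to{\Cal U}$) is then consistent, lies below the locally finite $Tp$ and hence is itself locally finite (local finiteness being inherited downward), is still ${\Cal U}$-valued, and is still a possibility relation; so it satisfies (\ref{eq:possibility},\ref{eq:consistency},\ref{eq:loc-fin}), and by the argument given there is actualized by $\Phi_i\defeq\bigcup(MTp)^{-1}i$, which — since $MTp\le p$ and only clause (a) of actualization consults the relation — actualizes $p$ as well.

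The $T$-operator is furnished directly by the Lemma of \S\ref{sec:sufficient}, ${\Cal U}$ being countably incomplete (a standing hypothesis on good ultrafilters, and in any case the only setting in which a non-trivial conclusion is possible). The $M$-operator is precisely what goodness buys: by the reformulation recorded just before Definition \ref{def:good}, for a set $S$ the existence of an $M$-operator on the possibility relations over $S$ says exactly that ${\Cal U}$, regarded as a meet-semilattice under $\cap$, is $\Powcof(S)$-good. Since $S$ here ranges over all sets of cardinality $\le\kappa$ while the hypothesis names only $\Powcof(\kappa)$, the one point requiring argument is that $\Powcof(\kappa)$-goodness of ${\Cal U}$ entails $\Powcof(S)$-goodness whenever $\card S\le\kappa$.

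That is where Lemma \ref{lem:inheritance} enters. For an infinite $\lambda\le\kappa$ I would write down $\msl$-morphisms $\Powcof(\lambda)\stackrel{s}{\rightarrow}\Powcof(\kappa)\stackrel{r}{\rightarrow}\Powcof(\lambda)$ with $r\circ s=id$, namely $s(A)=A\cup(\kappa\setminus\lambda)$ and $r(C)=C\cap\lambda$: both are monotone and preserve binary intersections, both have the claimed codomains because $\lambda\setminus A$ and $\lambda\setminus C=(\kappa\setminus C)\cap\lambda$ are finite, and $r(s(A))=A$ because $A\subseteq\lambda$. Lemma \ref{lem:inheritance} then promotes $\Powcof(\kappa)$-goodness to $\Powcof(\lambda)$-goodness, and hence — transporting along any bijection $S\cong\lambda$ — to $\Powcof(S)$-goodness for every $S$ with $\omega\le\card S\le\kappa$; for finite $S$ there is nothing to do, a finite finitely-satisfiable $\Sigma$ being satisfiable outright. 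Assembling: given a finitely satisfiable $\Sigma$ with $\card\Sigma\le\kappa$, build $p$, apply $T$, then apply the $M$-operator supplied by $\Powcof(\Sigma)$-goodness, and read off $\Phi$. This establishes condition (S) for $\kappa^+$, and with it the theorem.

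With the substantive machinery already laid down in \S\S\ref{sec:sufficient}--\ref{sec:goodness}, I expect the only real friction to be organizational: carrying the $\Powfin$/$\Powcof$ dualization through cleanly, and re-checking at each stage that the relation being constructed still takes its values in ${\Cal U}$ (so that the ultrafilter and goodness hypotheses genuinely apply) and that local finiteness is not lost in passing from $Tp$ down to $MTp$. None of this is deep, but it is the natural place for a slip. The single ingredient external to the order theory of Definition \ref{def:good} is the countable incompleteness of ${\Cal U}$, which the $T$-operator needs.
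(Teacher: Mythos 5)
Your proposal is correct and follows essentially the same route as the paper's (very terse) proof: reduce to condition (S), actualize $p$ via $MTp$ using countable incompleteness for $T$ and goodness for $M$, and invoke Lemma \ref{lem:inheritance} with the retraction $s(A)=A\cup(\kappa\setminus\lambda)$, $r(C)=C\cap\lambda$ to pass from $\Powcof(\kappa)$-goodness to $\Powcof(S)$-goodness for $\card{S}\le\kappa$ --- exactly the maps the paper exhibits. Your remark that countable incompleteness of ${\Cal U}$ must be assumed (it is not stated in the theorem, but the $T$-operator requires it and it is implicit throughout the paper) is apt.
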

\begin{proof}
Combine the preceding with the discussion in \S \ref{sec:sufficient}.
Lemma \ref{lem:inheritance} is not really needed, but
is applicable with ${\Cal W} = \Powcof(\beta)$, $\X = \Powcof(\kappa)$,
$s(x) = x \cup (\kappa\setminus\beta)$ and $r(y) = y\cap \beta$.
\end{proof}

Before taking up the main agenda, we pick off and enjoy some low-hanging fruit.

\begin{thm}
\label{thm:omega+good}
Any countably incomplete ultrafilter $\Cal U$ is $\omega^+$-good.
\end{thm}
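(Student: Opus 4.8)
The plan is to unwind Definition~\ref{def:good}: that the meet-semilattice $\Cal U$ (its members ordered by $\subseteq$, with $\cap$ for meet) is $\omega^+$-good means it is $\Powcof(\beta)$-good for every $\beta<\omega^+$, i.e.\ for every countable ordinal $\beta$. First I would collapse all these $\beta$ onto the single case $\beta=\omega$ using Lemma~\ref{lem:inheritance}: if $\beta$ is infinite (hence countable) take $s:\Powcof(\beta)\to\Powcof(\omega)$ the isomorphism induced by any bijection $\beta\to\omega$ and $r=s^{-1}$; if $\beta$ is finite, so that $\Powcof(\beta)=\Pow(\beta)$, take $s(x)=x\cup(\omega\setminus\beta)$ and $r(y)=y\cap\beta$ (these lie in $\msl$ by distributivity, with $r\circ s=id$ since $x\subseteq\beta$ --- exactly the device used in the proof of Theorem~\ref{thm:kappa+-saturated}). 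So it remains to show $\Cal U$ is $\Powcof(\omega)$-good.

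For that I would move to the complementary picture of \S\ref{sec:goodness}: a monotone $f:\Powcof(\omega)\to\Cal U$ is the same as a possibility function $p:\Powfin(\omega)\to\Cal U$, $p(\Theta)=f(\omega\setminus\Theta)$, and it is enough to produce a possibility function $q\le p$ with values in $\Cal U$ and $q(\Theta\cup\Theta')=q(\Theta)\cap q(\Theta')$, since complementation then returns a multiplicative $g\le f$. Using countable incompleteness, choose $K_m\in\Cal U$ with $\bigcap_{m\le n}K_m\downarrow\emptyset$, and put
\[
J_n\;:=\;\bigcap_{m\le n}K_m\;\cap\;\bigcap\bigl\{\,p(\Theta):\Theta\subseteq\{0,\dots,n\}\,\bigr\}\;\in\;\Cal U .
\]
These form a decreasing chain in $\Cal U$ with $\bigcap_n J_n=\emptyset$, and $J_n\subseteq p(\Theta)$ whenever $\Theta\subseteq\{0,\dots,n\}$. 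Set $M(i):=\min\{\,n:i\notin J_n\,\}$, so that $\{\,i:M(i)>n\,\}=J_n\in\Cal U$ for every $n$ --- the analogue of (\ref{eq:N}).

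Then I would define $q(\emptyset):=p(\emptyset)$ and $q(\Theta):=J_{\max\Theta}$ for $\Theta\neq\emptyset$; equivalently, $i\,q\,\Theta$ iff every member of $\Theta$ is $<M(i)$. The three conditions fall out of the nesting: $q$ takes values in $\Cal U$ (each value is some $J_n$ or $p(\emptyset)$); $q$ is a possibility function because the $J_n$ decrease; $q\le p$ because $\Theta\subseteq\{0,\dots,\max\Theta\}$ gives $J_{\max\Theta}\subseteq p(\Theta)$ while $J_0\subseteq p(\emptyset)$; and $q$ is multiplicative because $J_a\cap J_b=J_{\max(a,b)}$ together with $\max(\Theta\cup\Theta')=\max(\max\Theta,\max\Theta')$ forces $q(\Theta)\cap q(\Theta')=q(\Theta\cup\Theta')$ (the $\emptyset$ cases being immediate from $J_0\subseteq p(\emptyset)$). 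The single point that takes any thought is the choice of $q$ --- seeing that its value on $\Theta$ should depend on $\Theta$ only through $\max\Theta$ and be drawn from one $\subseteq$-chain in $\Cal U$ that simultaneously shrinks to $\emptyset$ (by countable incompleteness) and is fine enough to undercut each $p(\Theta)$ (possible because every finite $\Theta$ sits in an initial segment of $\omega$); once that is seen, multiplicativity and $q\le p$ require no computation. I expect no real obstacle beyond this, the finite-$\beta$ reduction and the $\Theta=\emptyset$ bookkeeping being routine. (As a byproduct, $\max\Theta<M(i)$ forces $|\Theta|\le M(i)$, so $q$ is also locally finite; over a countable index set this one construction supplies both the $M$- and $T$-operators of \S\ref{sec:sufficient} at once.)
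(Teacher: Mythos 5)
Your proof is correct and is at bottom the paper's own argument transported to the complementary $\Powfin$ picture: by monotonicity $\bigcap\bigl\{p(\Theta'):\Theta'\subseteq\{0,\dots,n\}\bigr\}=p(\{0,\dots,n\})=f(X_{n+1})$ where $X_n=\setbld{m\prec\omega}{n\preceq m}$, so your $q(\Theta)=J_{\max\Theta}$ is (up to the extra factor $\bigcap_{m\le\max\Theta}K_m$) exactly the paper's $f\circ M^*$, which sends a cofinite set to the $f$-value of the largest tail it contains, with multiplicativity following in both cases because all values are drawn from a single $\subseteq$-chain. The only divergences worth noting are that your intersection with the $K_m$'s is superfluous for Definition~\ref{def:good} (it buys only local finiteness, so countable incompleteness is not actually needed here, and the paper's proof never invokes it), while your explicit reduction from arbitrary $\beta\prec\omega^+$ to $\beta=\omega$ via Lemma~\ref{lem:inheritance} fills in a step the paper leaves tacit.
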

\begin{proof}
For $f\in\Pos(\Powcof(\omega),{\Cal U})$,
$f{\mkern-6mu}\upharpoonright{\mkern-6mu}\X$ 
is multiplicative if $\X\subseteq\Powcof(\omega)$ is \hbox{totally} ordered,
since for $\Theta,\Theta^\prime \in \X$, 
$f(\min(\Theta,\Theta^\prime)) = f(\Theta\cap\Theta^\prime) \subseteq 
f(\Theta)\cap f(\Theta^\prime)$.
Thus, if
$M^*\in\msl(\Powcof(\omega),\X)$, then
$Mf \defeq f\circ M^* \in\msl(\Powcof(\omega),{\Cal U})$.
Further, if $M^* \le id$ ($M^*\Theta \subseteq \Theta$), then $Mf \le f$. 
%With $\X$ the final subsets of $\omega$ ($\setbld{m\prec\omega}{n\preceq m})$ the 
These meet the requirements:
\begin{equation}
{\X}= \{X_n\}_{n\prec\omega} \text{ where } X_n \defeq\setbld{m\prec\omega}{n\preceq m},
\text{ and }
%{M^*}:\Powcof(\omega)\rightarrow\X\,::\, \Theta \mapsto 
{M^*}\Theta \defeq \bigcup\setbld{x\in\X}{x\subseteq \Theta}.
\nonumber  
\end{equation}
\end{proof}

%\newpage

\section{how to make a $\kappa^+$-good ultrafilter\label{sec:construction}}

\begin{thm}
\label{thm:good-ultrafilter}
If $\X$ is a meet-semilattice with $\card{\X} \le \kappa$, then there
is an $\X$-good ultrafilter on $\kappa$.  
\end{thm}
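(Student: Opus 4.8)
The plan is to build the desired ultrafilter on $\kappa$ by transfinite recursion of length $2^\kappa$ (or $\kappa^+\cdot 2^\kappa$, suitably bookkept), extending at each stage a filter and simultaneously arranging $\X$-goodness. First I would fix the counting problem: there are $2^\kappa$ monotone maps $f:\X\to\Pow(\kappa)$ to worry about, so I would enumerate as $(f_\xi)_{\xi\prec 2^\kappa}$ all candidate maps $\X\to\Pow(\kappa)$ (those that take values in the eventual ultrafilter will only be known at the end, so the recursion must treat \emph{every} monotone map and only ``succeed'' on the relevant ones — this is the ``which set is determined by the finished ultrafilter'' remark from the introduction). At stage $\xi$ I have a filter ${\Cal F}_\xi$ generated by fewer than $2^\kappa$ sets, together with a partial record of the multiplicative refinements $g_\eta\le f_\eta$ chosen so far for $\eta\prec\xi$.

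The core step is this: given a filter ${\Cal F}$ generated by $<2^\kappa$ sets and a monotone $f:\X\to\Pow(\kappa)$ such that every $f(x)$ meets every set in ${\Cal F}$ (equivalently, ${\Cal F}$ together with $\ran f$ has the finite intersection property in the appropriate sense — if not, we simply skip this $f$ and move on), produce a multiplicative $g:\X\to\Pow(\kappa)$ with $g\le f$ and such that ${\Cal F}$ extended by $\ran g$ still has the finite intersection property. This is where the special \emph{partitions of $\kappa$} promised for \S\ref{sec:stock-of-partitions} enter: one consumes a partition of $\kappa$ into $\kappa$-many large pieces that is ``independent'' of (or generic over) the current filter ${\Cal F}$, indexes the pieces by finite subsets of $\X$ or by $\X$ itself, and defines $g(x)$ by intersecting $f(x)$ with an appropriate union of partition blocks so that the meet identity $g(x\wedge y)=g(x)\cap g(y)$ holds by construction while genericity of the partition guarantees that no finite intersection with ${\Cal F}$-sets is killed. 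Then set ${\Cal F}_{\xi+1}$ to be the filter generated by ${\Cal F}_\xi\cup\ran g_\xi$, and at limit stages take unions; at the very end extend the resulting filter to an ultrafilter ${\Cal U}$ by Zorn.

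After the recursion, one checks that ${\Cal U}$ is $\X$-good: given any $f:\X\to{\Cal U}$ in $\Pos$, this $f$ appeared as some $f_\xi$ in the enumeration; since $\ran f\subseteq{\Cal U}$, $f$ was \emph{not} skipped (its range meets every filter set), so a multiplicative $g_\xi\le f_\xi$ was constructed and $\ran g_\xi\subseteq{\Cal F}_{\xi+1}\subseteq{\Cal U}$, exactly witnessing $f$-goodness. Two loose ends need care: the filter stays $\kappa$-generated-at-each-stage only if $2^\kappa$ steps each add $\le\kappa$ sets, which forces the partition supply to have size $2^\kappa$ and each partition to have $\kappa$ blocks — consistent with the setup — and the enumeration must have length exactly $2^\kappa$ with each map hit, so a standard pairing/bookkeeping argument on $2^\kappa$ is used (here $\card\X\le\kappa$ keeps $\card{\Pow(\kappa)^\X}=2^\kappa$).

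The main obstacle I expect is the \emph{genericity} requirement on the partitions and the verification that extending by $\ran g$ preserves the finite intersection property: one must show that for any finitely many current filter sets $A_1,\dots,A_n$ and any finitely many $x_1,\dots,x_m\in\X$, the set $A_1\cap\cdots\cap A_n\cap g(x_1)\cap\cdots\cap g(x_m)$ is nonempty (indeed large). Since $g(x_1)\cap\cdots\cap g(x_m)=g(x_1\wedge\cdots\wedge x_m)$ by multiplicativity, this reduces to a single $g(x)=f(x)\cap(\text{union of partition blocks})$, and the argument hinges on the partition being chosen so that this union meets $A_1\cap\cdots\cap A_n\cap f(x)$ — which is where the precise ``special properties'' of the partitions in \S\ref{sec:stock-of-partitions} must be exactly strong enough. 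Getting the interface between ``what the recursion demands of a partition'' and ``what \S\ref{sec:stock-of-partitions} delivers'' to match cleanly is the delicate part; everything else is bookkeeping and one application of Zorn's lemma.
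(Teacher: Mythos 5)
Your overall architecture --- a length-$2^\kappa$ recursion over an enumeration of $\Pos(\X,\Pow(\kappa))$, with a stock of large partitions consumed to disjointify $f$ and thereby manufacture a multiplicative $g\le f$ --- is the paper's. But there is a genuine gap in how you decide \emph{which} $f$'s to process and \emph{when}, concentrated in two moves: (i) ``at the very end extend the resulting filter to an ultrafilter ${\Cal U}$ by Zorn,'' and (ii) ``process $f_\xi$ at stage $\xi$ provided its range meets every current filter set.'' Goodness is a property relative to the finished ultrafilter: every monotone $f:\X\rightarrow{\Cal U}$ needs a multiplicative minorant with range in ${\Cal U}$. A final Zorn extension adds sets the recursion never saw, so maps taking values in ${\Cal U}\setminus\filt^*$ are covered by your verification only if the stage-$\xi$ processing already handled them; and there the trouble is that the partition mechanism requires more than consistency of $\ran f_\xi$ with $\filt_\xi$. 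In the key computation, $C\cap g(s)\cap A$ is shown nonempty by exhibiting it as a superset of $(C\cap D(s))\cap(f(s)\cap A)$, where $C\cap D(s)$ is a polycell and $f(s)\cap A$ is a \emph{member of the filter}; if $f(s)$ is merely in $\con\filt_\xi$ rather than in $\filt_\xi$, polyconsistency of the partition stock gives you nothing, and you would instead need the partitions to be polyconsistent with $\Flt{\filt_\xi\cup\ran f_\xi}$ --- a condition that can fail and cannot be maintained simultaneously for all $2^\kappa$ candidates.

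The paper's proof is engineered precisely around this. First, it interleaves an enumeration $\{J_\gamma\}$ of $\Pow(\kappa)$ into the recursion and decides each $J_\gamma$ as it goes (Lemma \ref{lem:deciding}, at the cost of discarding finitely many partitions per decision), so that the limit filter $\filt^*$ is already an ultrafilter and no terminal Zorn step occurs. Second, it does \emph{not} process $f_\xi$ at stage $\xi$: at each successor stage it makes the filter good for the \emph{least} not-yet-handled $f_\delta$ whose range is already \emph{contained in} the current filter. That every $f$ with $\ran f\subseteq\filt^*$ is eventually caught then needs two further ingredients you have not supplied: $\kappa<\cf{2^\kappa}$ (Prop.~\ref{prop:strong-cantor}, via K\"onig's theorem), which forces $\ran f\subseteq\filt_\gamma$ for some $\gamma\prec 2^\kappa$, and the fact that $2^\kappa$ is a cardinal, so that $f_\delta$ is then reached within another $\delta$ steps. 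Your reduction of the finite-intersection check to a single $g(x)\cap A_1\cap\cdots\cap A_n$ via multiplicativity is correct and matches the paper, as does the resource bookkeeping ($2^\kappa$ partitions, finitely many consumed per step, intersected at limits); it is the scheduling and the ultrafilter-completion that must be replaced by the paper's device.
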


\S \ref{sec:recursion} describes a transfinite recursion to construct the required ultrafilter.
Lemmata \ref{lem:making-good} and \ref{lem:deciding} show how to carry out
successor steps with the aid of a supply $\Pi$ of partitions of $\kappa$ so as to
preserve usability of these auxiliary resources.
The proof of Thm. \ref{thm:good-ultrafilter} is finished at the end of this Section,
but it depends on the existence of a suitable $\Pi$, the construction of
which is given \S \ref{sec:stock-of-partitions}.

\subsection{A transfinite recursion\label{sec:recursion}}

If $\X$ is a meet-semilattice with $\card{\X} = \kappa$, then 
$\card{\Pos(\X,\Pow(\kappa))} \le (2^\kappa)^\kappa = 2^{\kappa\cdot\kappa} = 2^\kappa$.
So, fix length-$2^\kappa$ enumerations 
%\begin{align}
\begin{equation}
 \Pow(\kappa) = \{J_\gamma\}_{\gamma \prec 2^\kappa}, \quad
\Pos(\X,\Pow(\kappa)) = \{f_\delta\}_{\delta \prec 2^\kappa},
\nonumber
\end{equation}
%\end{align}
and, starting from a countably-incomplete filter $\filt_0$,
carry out a transfinite recursion of length $2^\kappa$ with the following
steps.
\begin{enumerate}
\item[{suc}$_\sigma$:]
At a successor stage $\sigma = \gamma + 1$, construct filter $\filt_{\gamma+1}$
satisfying
\begin{itemize}
\item  
$\filt_{\gamma+1} \supseteq \filt_{\gamma}$,
\item  
% If $U \defeq \setbld{\delta \prec 2^\kappa}{J_\delta \text{ undecided by }\filt_\gamma}$
% is nonempty,
% \newline $\filt_{\gamma+1}$ decides $J_{\min U}$.
$\filt_{\gamma+1}$ decides $J_{\gamma + 1}$.
\item  
$\filt_{\gamma+1}$ is $f_\delta$-good, where $\delta$ is $\prec$-least in
\newline
$\setbld{\delta \prec 2^\kappa}{\ran f_\delta \subseteq \filt_\gamma 
\text{ but } \filt_\gamma \text{ not } f_\delta\text{-good}}$ 
(skip, if that set is empty).
% $B \defeq \setbld{\delta \prec 2^\kappa}{\ran f_\delta \subseteq \filt_\gamma 
% \text{ but } \filt_\gamma \text{ not } f_\delta\text{-good}}$ is nonempty,
% \newline $\filt_{\gamma+1}$ is $(f_{\min B})$-good.
\end{itemize}
\item[{lim}$_\lambda$:]
At a limit stage $\lambda$, take
$\filt_\lambda = \bigcup_{\gamma \prec \lambda} \filt_{\gamma}$.
\end{enumerate}

{If} step suc$_{\gamma+1}$ is possible for every $\gamma \prec 2^\kappa$, 
then \hbox{$\filt^* \defeq \cup\setbld{\filt_{\gamma}}{\gamma \prec 2^\kappa}$} is countably 
incomplete [it contains $\filt_0$], and is an ultrafilter 
[it decides every $J$ in $\Pow(\kappa)$]. 
$\kappa^+$-goodness is trickier, and involves the concept of {\it cofinality}. 
[For $\Y$ a poset, $X\subseteq \Y$ is cofinal in $\Y$ if, for all $y\in \Y$, there
exists $x\in X$, $y \preceq x$. The {\it cofinality} of $\Y$, denoted $\cf{\Y}$, 
is the least cardinal of sets cofinal in $\Y$.]
Suppose $f_\delta \in \Pos(\X,\Pow(\kappa))$ has range in $\filt^*$.
Since $\card{\ran{f_\delta}} \le \kappa < \cf{2^\kappa}$ 
[the second inequality by Prop. \ref{prop:strong-cantor} below], 
$\ran f_\delta \subset \filt_\gamma$ for some $\gamma < 2^\kappa$. 
The growing filter is made $f_\delta$-good
within another $\delta$ steps of the recursion, which point is certainly reached
because $2^\kappa$ is a cardinal, so that $\gamma + \delta \prec 2^\kappa$.

\begin{prop}
\label{prop:strong-cantor}
If $\alpha$ is an infinite cardinal and $2\le \lambda$, 
then $\alpha < \cf{\lambda^\alpha}$.  
\end{prop}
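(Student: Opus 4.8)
The plan is to recognize this as König's theorem in a special case and to give a self-contained Cantor-style diagonal argument. Write $\mu = \lambda^\alpha$ (cardinal exponentiation; if $\lambda$ is presented as a large ordinal, replace it by $\card{\lambda} \ge 2$). First I would record two elementary facts. By Cantor, $\mu \ge 2^\alpha > \alpha$, so $\mu$ is an infinite cardinal strictly above $\alpha$. And, using the standard identity $\alpha\cdot\alpha = \alpha$ for infinite cardinals together with the exponent laws, $\mu^\alpha = (\lambda^\alpha)^\alpha = \lambda^{\alpha\cdot\alpha} = \lambda^\alpha = \mu$. This second fact is the one that supplies the elbow room for the diagonalization, so I would fix, once and for all, a bijection $b : {}^{\alpha}\mu \to \mu$, where ${}^{\alpha}\mu$ denotes the set of functions $\alpha \to \mu$.

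Next I would argue by contradiction: suppose $\cf{\mu} \preceq \alpha$. Since $\mu$, being an infinite cardinal, is a limit ordinal, this means $\mu$ can be written as a union $\mu = \bigcup_{i\prec\alpha} A_i$ of $\alpha$-many sets with $\card{A_i} < \mu$ for each $i$: take the $A_i$ to be the ordinals in a cofinal subset of $\mu$ of cardinality $\preceq\alpha$ (every ordinal $\prec\mu$ has cardinality $<\mu$ because $\mu$ is a cardinal), padding with $\emptyset$ if $\cf{\mu}\prec\alpha$. Transporting this cover along $b$, I obtain ${}^{\alpha}\mu = \bigcup_{i\prec\alpha} B_i$ where $B_i = b^{-1}[A_i]$ and $\card{B_i} < \mu$ for all $i$.

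Then comes the diagonal step. For each $i\prec\alpha$ consider the $i$-th coordinate slice $P_i = \setbld{g(i)}{g \in B_i} \subseteq \mu$. Being the image of $B_i$ under a single-valued map, $\card{P_i} \le \card{B_i} < \mu = \card{\mu}$, so $P_i \neq \mu$ and I may choose $\xi_i \in \mu \setminus P_i$. Define $h \in {}^{\alpha}\mu$ by $h(i) = \xi_i$. For every $i$ we have $h \notin B_i$, since every $g\in B_i$ has $g(i)\in P_i$ whereas $h(i) = \xi_i \notin P_i$. Hence $h \notin \bigcup_{i\prec\alpha} B_i = {}^{\alpha}\mu$, which is absurd. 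Therefore $\cf{\mu} > \alpha$, i.e. $\alpha < \cf{\lambda^\alpha}$.

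I do not expect a genuine obstacle here. The only things to handle carefully are the bookkeeping around ``$\cf{\mu}\preceq\alpha$ yields a cover of $\mu$ by $\alpha$ sets of smaller cardinality'' (and that ordinals below a cardinal are smaller in cardinality), and the one conceptual point that is easy to miss on a first reading: one must replace $\mu$ by the equinumerous ${}^{\alpha}\mu$ \emph{before} diagonalizing, which is precisely where the computation $\mu^\alpha = \mu$ is used.
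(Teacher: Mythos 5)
Your proof is correct and follows essentially the same route as the paper: both rest on the computation $\mu^\alpha = (\lambda^\alpha)^\alpha = \lambda^{\alpha\cdot\alpha} = \mu$ followed by a K\"onig-style diagonalization. The only difference is packaging --- the paper isolates K\"onig's theorem ($\mu < \mu^{\mathrm{cf}\,\mu}$) as an intermediate claim and derives the contradiction $\mu < \mu^{\mathrm{cf}\,\mu} \le \mu^\alpha = \mu$, whereas you diagonalize directly against a cover of ${}^{\alpha}\mu$ by $\alpha$ sets of cardinality less than $\mu$.
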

\begin{proof}
We prove this using the following \newline
Claim (K\"onig's theorem): if $\mu$ is an infinite cardinal, then 
$\mu < \mu^{\,\mathrm{cf}\,\mu}$.
\newline
Suppose the proposition is false, so
$\alpha \ge \mathrm{cf}\, \lambda^\alpha = \mathrm{cf}\, \mu$,
with abbreviation $\mu = \lambda^\alpha$.
Then, using the Claim in the first inequality that follows, 
\begin{equation}
\mu < \mu^{\mathrm{cf}\,\mu} \le \mu^\alpha = \lambda^{\alpha\cdot\alpha} = \lambda^\alpha = \mu.
\nonumber
\end{equation}
As this is a blatant contradiction, the Proposition is proven, once the Claim is secured.
Let $Y$ be an unbounded subset of $\mu$, and suppose that
an enumeration $\{g_\gamma\}_{\gamma < \mu}$ of all functions $Y \rightarrow \mu$ exists.
Diagonalize out of the enumeration as follows.
Define
$A_\delta \defeq \setbld{g_\gamma(\delta)}{\gamma < \mu}$ for $\delta\in Y$.
Since $\card{A_\delta} = \delta < \mu$, 
we can choose $f(\delta)$ from $\mu\setminus A_\delta \neq \emptyset$.
Then, $f \neq g_\gamma$ for every $\gamma$.
For, by unboundedness of $Y$, $\gamma < \delta \in Y$ for some $\delta$, implying that 
$f(\delta) \neq g_\gamma(\delta)$.
\end{proof}
\subsection{Partitions of $\kappa$ as a resource\label{sec:using-partitions}}

We begin with a simple order-theoretic observation. 
\begin{lem}
\label{lem:hat-map}
Suppose $\X$ is a meet-semilattice and $h:\X {\rightarrow} {\Cal Y}$, where
${\Cal Y} \subseteq \Pow(\kappa)$ is $\cup$-complete. Then,
the least $\Pos$ morphism $\hat{h}$ satisfying $h\le \hat{h}$ is
\begin{equation}
\label{eq:hat-map}  
\hat{h}: \X {\rightarrow} {\Cal Y} 
\,::\, x \mapsto \bigcup\setbld{h(y)}{y\le x}.
\end{equation}
If $h$ is disjoint, then $\hat{h}$ is also multiplicative.
\end{lem}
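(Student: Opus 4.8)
The plan is to verify the three assertions in turn: that $\hat h$ is a well-defined $\Pos$ morphism with $h\le\hat h$; that it is the least such; and that disjointness of $h$ upgrades it to an $\msl$ morphism.

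First I would check that $\hat h$ takes values in $\Y$, is monotone, and dominates $h$. For each $x\in\X$ the family $\setbld{h(y)}{y\le x}$ is a subfamily of the range of $h$, hence a subset of $\Y$; since $\Y$ is $\cup$-complete its union again lies in $\Y$, so $\hat h(x)\in\Y$. (This is the one place $\cup$-completeness is used; in the intended application the index set $\setbld{y}{y\le x}$ has cardinality at most $\card{\X}\le\kappa$, so closure under unions of size $\le\kappa$ already suffices.) If $x\le x'$ then $\setbld{y}{y\le x}\subseteq\setbld{y}{y\le x'}$, whence $\hat h(x)\subseteq\hat h(x')$, so $\hat h\in\Pos(\X,\Y)$; and taking $y=x$ in the defining union gives $h(x)\subseteq\hat h(x)$, i.e. $h\le\hat h$.

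For minimality, suppose $g\in\Pos(\X,\Y)$ with $h\le g$. For any $y\le x$ we have $h(y)\subseteq g(y)\subseteq g(x)$ — the first inclusion by $h\le g$, the second by monotonicity of $g$. Taking the union over all such $y$ yields $\hat h(x)\subseteq g(x)$, hence $\hat h\le g$; so $\hat h$ is indeed the least $\Pos$ morphism dominating $h$.

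It remains to show that, when $h$ is disjoint (which I read as: the sets $\setbld{h(x)}{x\in\X}$ are pairwise disjoint), $\hat h$ is multiplicative. The inclusion $\hat h(x\wedge y)\subseteq\hat h(x)\cap\hat h(y)$ follows from monotonicity of $\hat h$, since $x\wedge y\le x$ and $x\wedge y\le y$. For the reverse, suppose some element of $\kappa$ lies in $\hat h(x)\cap\hat h(y)$; then it lies in $h(z)$ for some $z\le x$ and in $h(w)$ for some $w\le y$. Since $h(z)\cap h(w)\neq\emptyset$ and $h$ is disjoint, $z=w$, so $z\le x$ and $z\le y$, hence $z\le x\wedge y$ by the meet-semilattice axiom, and therefore the element lies in $h(z)\subseteq\hat h(x\wedge y)$. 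This gives $\hat h(x)\cap\hat h(y)\subseteq\hat h(x\wedge y)$, completing the equality. I do not anticipate any genuine obstacle: everything is a direct unravelling of the definitions, the only mild subtlety being the well-definedness of the union, which is exactly what the $\cup$-completeness hypothesis is there to handle.
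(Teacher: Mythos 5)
Your proposal is correct and follows essentially the same route as the paper: the paper likewise notes that monotonicity and minimality of $\hat{h}$ are immediate, and proves multiplicativity by observing that disjointness means each $\gamma\in\kappa$ lies in $h(w)$ for at most one $w$, so $\gamma\in\hat{h}(x\wedge y)$ iff $w\le x$ and $w\le y$ iff $\gamma\in\hat{h}(x)\cap\hat{h}(y)$. Your version merely spells out the well-definedness via $\cup$-completeness and the minimality computation in more detail.
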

\begin{proof}
$\hat{h}$ is monotone, and clearly a monotone function as large as $h$ is as large as $\hat{h}$,
so the first assertion is immediate.
If $h$ is disjoint, given $\gamma \in \kappa$, there is {\em at most} one $w\in \X$ 
such that $\gamma\in h(w)$. So,
$\gamma \in \hat{h}(x\wedge y)$ iff (such $w$ exists and)
$w \le x\wedge y$, iff $w\le x$ and $w\le y$, 
iff $\gamma \in \hat{h}(x)$ and $\gamma \in \hat{h}(y)$.
Thus $\hat{h}$ is multiplicative.
\end{proof}
This result is immediately suggestive. Given $f:\X \rightarrow {\filt}$ 
in $\Pos$, we attempt to get a multiplicative $g \le f$ by cutting $f$ back
to a disjoint map $h \le f$ and then using the ``hat map''(\ref{eq:hat-map}).
Chopping up the image of $f$ with a partition is an obvious way of getting the 
disjoint $h$. That motivates the next couple of definitions.

\begin{defn}[partition, large, polycell, independent]\label{def:partition}
A {\it partition} $P$ is a set of disjoint subsets of $X$ ({\it cells} of $P$),
which cover $X$.
$P$ is {\it large} if $\card{P} = \card{X}$.
If $\Pi$ is a collection of partitions, a {\it polycell} is the intersection
$C_1\cap\cdots\cap C_n$ of a finite number of cells from distinct partitions
$P_1,\ldots,P_n \in \Pi$. The set of polycells of $\Pi$ is denoted $\pcell{\Pi}$.
$\Pi$ is {\it independent} (or, ``a set of independent partitions'') 
if $\emptyset \not\in \pcell{\Pi}$.
\end{defn}
\begin{defn}[consistent, polyconsistent]\label{def:consistent}
If $\filt$ is a filter on $Y$, then
\begin{equation}
\con \filt \defeq \setbld{y \in Y}{x\cap y \neq \emptyset, \forall x\in \filt}
\label{eq:consistent}
\end{equation}
is the set of subsets of $Y$ {\it consistent with} $\filt$.
A subset of (function into) $\Pow(Y)$ is called consistent with $\filt$ if 
it (its range) is a subset of $\con \filt$.
A collection $\Pi$ of partitions is {\it polyconsistent} with $\filt$ if
$\pcell{\Pi} \subseteq \con \filt$.
\end{defn}
\begin{rem}
\label{rem:consistency}
The notion of consistency is central, as Lemmata \ref{lem:making-good}, 
\ref{lem:deciding} will show.
A few points to note: 
$\con \filt$ is closed under taking supersets. 
$\filt\cup A$ generates a filter iff the $\cap$-semilattice generated by $A$ 
(finite intersections) is in $\con \filt$.
Finally, $Y\setminus(\con \filt)$ is closed under union.
For, if $a\cap x = b\cap y = \emptyset$ 
for $x,y\in \filt$, then 
$(a\cup b)\cap(x\cap y) = 
(a\cap x\cap y)\cup (b\cap x\cap y) = \emptyset$.
\end{rem}

\begin{lem}\label{lem:making-good}
Let $\Pi$ be a collection of large partitions of $\kappa$ polyconsistent with the
filter $\filt \subset \Pow(\kappa)$,
\begin{equation}
f: \X \rightarrow \filt\,\,{\text in}\,\, \Pos,
%f\in\Pos( \X, \filt)
\nonumber  
\end{equation}
and $P \in \Pi$. 
Then there exists a filter 
$\filt^\prime = \Flt{\filt \cup \ran g} \supseteq \filt$ and
\begin{equation}
g:\X \rightarrow \filt^\prime\,\,{\text in}\,\msl,\quad g\le f
\nonumber  
\end{equation}
%$$g:\X \rightarrow \Pow(\kappa)\,\,{\text in}\,\msl$$
such that 
$\Pi^\prime = \Pi\setminus P$ is polyconsistent with $\filt^\prime$. 
\end{lem}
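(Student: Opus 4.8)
The plan is to follow the recipe sketched just before Definition~\ref{def:partition}: use the cells of $P$ to cut $f$ back to a \emph{disjoint} map $h\le f$, then take $g$ to be the hat map $\hat h$ of Lemma~\ref{lem:hat-map}, which is then automatically multiplicative. I will set $\filt'\defeq\Flt{\filt\cup\ran g}$; the two points that need real work are (i) that $\filt\cup\ran g$ generates a \emph{proper} filter, and (ii) that $\Pi'=\Pi\setminus P$ is polyconsistent with it. The remaining requirements --- $\filt'\supseteq\filt$, and $g:\X\to\filt'$ in $\msl$ once $\filt'$ is known to be a filter --- are then immediate.

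\emph{Constructing $g$.} Since $P$ is large, $\card{P}=\kappa\ge\card{\X}$ (recall $\card{\X}\le\kappa$), so fix an injection $x\mapsto C_x$ of $\X$ into the cells of $P$ and put $h(x)\defeq f(x)\cap C_x$. Then $h\le f$, and $h$ is disjoint because distinct cells of $P$ are disjoint. Applying Lemma~\ref{lem:hat-map} with the $\cup$-complete $\Y=\Pow(\kappa)$ produces $g\defeq\hat h$ in $\msl(\X,\Pow(\kappa))$ with $h\le g$; and $g\le f$, since $g(x)=\bigcup_{y\le x}h(y)\subseteq\bigcup_{y\le x}f(y)=f(x)$ by monotonicity of $f$. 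Multiplicativity of $g$ makes $\ran g$ closed under finite intersections, so by Remark~\ref{rem:consistency} it is enough, for (i), to verify $\ran g\subseteq\con\filt$.

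\emph{The key estimate.} I would isolate the assertion that for every $D\in\pcell{\Pi'}\cup\{\kappa\}$, every $x\in\X$ and every $A\in\filt$,
\[
D\cap C_x\cap f(x)\cap A\neq\emptyset .
\]
This is exactly where the partition $P$ is ``consumed'': $D$ is an intersection of cells from partitions of $\Pi$ \emph{other than} $P$, so $D\cap C_x$ is again a polycell of $\Pi$ and hence lies in $\con\filt$ by polyconsistency of $\Pi$ with $\filt$; since $f(x)\cap A\in\filt$, the displayed intersection is nonempty. Taking $D=\kappa$ gives $g(x)\cap A\supseteq h(x)\cap A\neq\emptyset$ for all $A\in\filt$, i.e.\ $\ran g\subseteq\con\filt$, which settles (i), so that $\filt'$ is a proper filter extending $\filt$. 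For (ii): if $\X=\emptyset$ then $\filt'=\filt$ and there is nothing to prove, while otherwise any $E\in\filt'$ contains $A\cap g(x)$ for some $A\in\filt$ and $x\in\X$ (collapse a generating finite intersection using multiplicativity of $g$ together with $\kappa\in\filt$). Then for $D\in\pcell{\Pi'}$, $D\cap E\supseteq D\cap A\cap h(x)=D\cap C_x\cap f(x)\cap A\neq\emptyset$ by the estimate, so $D\in\con\filt'$; thus $\pcell{\Pi'}\subseteq\con\filt'$, i.e.\ $\Pi'$ is polyconsistent with $\filt'$.

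The only genuinely delicate step is the bookkeeping inside the key estimate --- in particular the observation that intersecting a polycell of $\Pi\setminus P$ with a single cell of $P$ yields a polycell of the \emph{full} family $\Pi$, which is what makes polyconsistency survive the extension. Everything else is routine manipulation of generated filters and of the hat map.
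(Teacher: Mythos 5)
Your proposal is correct and follows essentially the same route as the paper: an injection of $\X$ into the cells of $P$ cuts $f$ back to a disjoint $h$, the hat map of Lemma~\ref{lem:hat-map} gives the multiplicative $g\le f$, and the crucial observation in both arguments is that intersecting a polycell of $\Pi\setminus P$ with a cell of $P$ yields a polycell of the full $\Pi$, which polyconsistency with $\filt$ then handles. Your version merely makes explicit a couple of points the paper leaves implicit (properness of the generated filter via $\ran g\subseteq\con\filt$, and the reduction of a general element of $\filt'$ to the form $g(x)\cap A$).
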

\begin{proof}
Since $\card{\X} \le \card{P} = \kappa$, there is an injective map
$D: \X \rightarrow P$. 
Since $\ran f \subseteq \filt$ and $P\subseteq \con \filt$, the map
\begin{equation}
%h: \X \rightarrow \Pow(\kappa)\setminus\emptyset 
h: \X \rightarrow \con \filt \,::\, s \mapsto f(s) \cap D(s)
\nonumber  
\end{equation}
is disjoint. % with $\ran h \subseteq \con \filt$.
Therefore, the range of $g \defeq \hat{h}$ is an $\cap$-semilattice in
$\con \filt$, and
$\filt^\prime \defeq \Flt{ \filt\cup \ran g} = \setbld{g(s)\cap A}{s\in \X, A\in \filt}$. 
For $C \in \pcell{\Pi\setminus P}$
%, $s\in\X$, and $A\in\filt$,
and $g(s)\cap A\in\filt^\prime$,
$C \cap (g(s) \cap A)\supseteq
C \cap h(s) \cap A = [C \cap D(s)] \cap [f(s) \cap A]$.
This is nonempty because 
$C \cap D(s) \in \pcell{\Pi}$, $f(s) \cap A \in \filt$, and $\Pi$ is polyconsistent with $\filt$.
\end{proof}

\begin{lem}\label{lem:deciding}
Let $\Pi$ be a collection of large partitions of $\kappa$ polyconsistent with the
filter $\filt \subset \Pow(\kappa)$,
and $J \subseteq \kappa$.
Then, either
\begin{enumerate}
\item[(a)] $\Pi^\prime = \Pi$ is polyconsistent with 
$\filt^\prime = \Flt{ \filt\cup\{J\}  }$, or
\item[(b)] $\Pi^\prime \subseteq \Pi$ cofinite,
is polyconsistent with $\filt^\prime = \Flt{ \filt\cup\{\kappa\setminus J\}  }$.
\end{enumerate}
\end{lem}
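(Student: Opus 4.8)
The plan is to decide $J$ by examining whether it is consistent with $\filt$ together with the relevant polycells. Recall from Remark \ref{rem:consistency} that $\kappa \setminus (\con\filt)$ is closed under union, and that $\filt \cup A$ generates a filter exactly when the $\cap$-semilattice generated by $A$ lies in $\con\filt$. First I would distinguish the two cases according to the following dichotomy: either (a) $J \cap C \in \con\filt$ for \emph{every} $C \in \pcell{\Pi}$, or else (b) there is some polycell $C_0 \in \pcell{\Pi}$ with $J \cap C_0 \notin \con\filt$.

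In case (a): I claim $\Pi' = \Pi$ is polyconsistent with $\filt' = \Flt{\filt \cup \{J\}}$. Elements of $\filt'$ have the form $J \cap A$ with $A \in \filt$ (using that $J$ is already consistent with $\filt$, e.g.\ by taking $C = \kappa$, the empty intersection of cells, so $\filt'$ is a genuine filter). Given $C \in \pcell{\Pi'} = \pcell{\Pi}$, one needs $C \cap (J \cap A) \neq \emptyset$; but $C \cap J \in \con\filt$ by hypothesis, and $C \cap J$ contains $C' \cap J$ for some smaller polycell if needed — more directly, $C \cap J \in \con\filt$ means $(C\cap J) \cap A \neq \emptyset$ for all $A\in\filt$, which is exactly what we want.

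In case (b): write $C_0 = D_1 \cap \cdots \cap D_n$ with cells $D_j$ from distinct partitions $P_1,\dots,P_n \in \Pi$. I would set $\Pi' = \Pi \setminus \{P_1,\dots,P_n\}$, which is cofinite in $\Pi$. The key point is that for \emph{this} $\Pi'$, every polycell $C \in \pcell{\Pi'}$ is disjoint from $C_0$'s partitions, and I must show $C \cap (\kappa\setminus J) \in \con\filt$. Since $C \in \con\filt$ (as $\Pi$, hence $\Pi'\cup\{P_1,\dots,P_n\}$, was polyconsistent), it suffices to show $C \cap J \notin \con\filt$ — because then, since $C = (C\cap J) \cup (C \setminus J)$ and $\kappa\setminus(\con\filt)$ is closed under union while $C \in \con\filt$, we must have $C \setminus J = C \cap (\kappa\setminus J) \in \con\filt$. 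So the crux is: for any $C\in\pcell{\Pi'}$, $C \cap J \notin \con\filt$. Here I would use that $C \cap C_0 \in \pcell{\Pi}$ (the partitions are all distinct since $C$ uses none of $P_1,\dots,P_n$), hence $C \cap C_0 \cap J = C \cap (C_0 \cap J)$; but $C_0 \cap J \notin \con\filt$, and I need to propagate this. This last propagation step — showing $C \cap J \notin \con\filt$ from $C_0 \cap J \notin \con\filt$ — is the main obstacle and likely requires a bit more care than the rest; I expect it follows because $C_0 \cap J \notin \con\filt$ supplies $A \in \filt$ with $(C_0 \cap J) \cap A = \emptyset$, and then one must argue the same $A$ (or a further shrinking) witnesses $C \cap J \cap A' = \emptyset$, which should work precisely when $C \supseteq C_0$ or when $C$ and $C_0$ are comparable along the partition structure; otherwise the correct move is to observe that in case (b) we may as well take $C_0$ to be a \emph{single cell} $D \in P$ for some $P\in\Pi$ with $D\cap J \notin\con\filt$ (if some polycell fails, one of its defining cells' refinements does), set $\Pi' = \Pi\setminus\{P\}$, and then every $C\in\pcell{\Pi'}$ satisfies $C\cap J \notin \con\filt$ because $C \cap D \in \pcell{\Pi}$ refines things correctly and $C = \bigcup\{C\cap D' : D' \in P\}$ with the union-closure argument applied cell-by-cell. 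Cleaning up exactly which cells to remove so that the union-closure argument goes through uniformly is where I would spend the most effort.
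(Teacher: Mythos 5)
Your setup matches the paper's: the dichotomy on whether $C\cap J\in\con\filt$ for all polycells $C$, the choice $\Pi'=\Pi\setminus\{P_1,\dots,P_n\}$ (cofinite), the correct handling of case (a), and the identification of the union-closure fact from Remark \ref{rem:consistency} as the key tool. But case (b) has a genuine gap: the intermediate claim you aim at --- that \emph{every} $C\in\pcell{\Pi'}$ satisfies $C\cap J\notin\con\filt$ --- is false, not merely hard to prove. For a concrete failure, take three mutually independent large partitions $P=\{D_\alpha\}$, $Q=\{E_\alpha\}$, $R$ of $\kappa$ with all polycells nonempty, let $\filt=\{\kappa\}$ (so $\con\filt$ is just the nonempty sets), and let $J=\bigcup_\alpha (D_\alpha\cap E_\alpha)$. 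Then $C_0=D_0\cap E_1$ is disjoint from $J$, putting you in case (b) with $\Pi'=\{R\}$; yet every cell of $R$ meets $J$ by independence, so your intermediate claim fails for every polycell of $\Pi'$. Your fallback (replace $C_0$ by a single bad cell) fails in the same example, since every single cell of $P$ and of $Q$ meets $J$; and the ``cell-by-cell'' union argument cannot be rescued because $\kappa\setminus(\con\filt)$ is only closed under \emph{finite} unions, while the partitions are large.

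The repair is small and is exactly what the paper does: do not try to propagate $J$-inconsistency from $C_0$ to $C$; intersect with $C_0$ instead. For $C\in\pcell{\Pi'}$, the set $C\cap C_0$ is a polycell of $\Pi$ (its cells come from distinct partitions precisely because $\Pi'$ omits $P_1,\dots,P_n$), hence $C\cap C_0\in\con\filt$. Now $C\cap C_0\subseteq (C_0\cap J)\cup\bigl(C\cap(\kappa\setminus J)\bigr)$, and $C_0\cap J\notin\con\filt$; since $\con\filt$ is closed under supersets and its complement under finite unions, this forces $C\cap(\kappa\setminus J)\in\con\filt$, which is exactly what (b) requires. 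The paper states this contrapositively: if both (a) and (b) failed, with witnesses $C$ and $C'$ drawn from disjoint sets of partitions, then $C\cap C'\subseteq (C\cap J)\cup\bigl(C'\cap(\kappa\setminus J)\bigr)$ would be a polycell of $\Pi$ lying outside $\con\filt$, contradicting polyconsistency.
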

\begin{proof}
Assume (a) fails.
Then, for a polycell $C$ of some finite 
\hbox{ $\{P_1,\ldots,P_n\} \subseteq \Pi$}, 
\hbox{$C \cap J \not\in \con \filt$.}
If (b) also fails with 
\hbox{$\Pi^\prime \defeq \Pi\setminus\{P_1,\ldots,P_n\}$},
there is similarly \hbox{$C^\prime\in \pcell{\Pi^\prime}$}
such that $C^\prime \cap (\kappa\setminus J)\not\in \con \filt$.
But then 
$C\cap C^\prime \subseteq (C\cap J) \cup (C^\prime \cap (\kappa\setminus J))$
is not in $\con \filt$ [see Rem. \ref{rem:consistency}], 
contradicting $\pcell{\Pi} \subseteq \con \filt$.
\end{proof}

\begin{proof}[Proof of Thm. \ref{thm:good-ultrafilter}]

Lemmata \ref{lem:making-good} and \ref{lem:deciding} show how the
successor steps of the recursion described in \S \ref{sec:recursion}
are to be carried out with the aid of some partitions of $\kappa$.
Given $\Pi_\gamma$ satisfying usable$_\gamma$: 
$\card{\Pi} = 2^\kappa$ and $\pcell{\Pi_\gamma} \subseteq \con \filt_\gamma$,
as well as $f:\X\rightarrow \filt_\gamma$ and $J\in\Pow(\kappa)$, the
transformations $(\filt,\Pi) \mapsto (\filt^\prime,\Pi^\prime)$ in
the Lemmata applied in sequence yield 
$\Pi_{\gamma+1}$ which is usable$_{\gamma+1}$, $f$-good, and decides $J$.

For limit stage lim$_\lambda$, $\lambda \prec 2^\kappa$, 
$\filt_\lambda = \cup_{\gamma \prec \lambda} \filt_\gamma$.
Add the rule $\Pi_{\lambda} = \cap_{\gamma \prec \lambda} \Pi_{\gamma}$, and
assume that ${\Pi_\gamma}$ is usable$_{\gamma}$ for all $\gamma \prec \lambda$.
Since $2^\kappa$ is a cardinal, $\card{\Pi_\lambda} = 2^\kappa$.
If $A \in \filt_\lambda$ and $C\in \pcell{\Pi_\lambda}$,
then $A\in \filt_\beta$ for some $\beta \prec \lambda$ and 
$C\in \pcell{\Pi_\beta}$, so $C\cap A \neq \emptyset$ by 
the inductive hypothesis. So, $\Pi_\lambda$ is polyconsistent with
$\filt_\lambda$ and thus usable$_{\lambda}$.

Finally, we need a countably incomplete filter $\filt_0$ 
and a usable$_0$ $\Pi_0$ to start the recursion.
For $n\prec\omega$, take 
${K}_n \defeq \setbld{ \alpha \prec \kappa }{\alpha = \lambda + m 
\text{ for a limit ordinal } \lambda, \text{ and } 0\le m \le n}$ 
($0$ is a limit ordinal).
Then, $K_n \downarrow \emptyset$, and
with $\mu$ the common cardinality of the $K_n$'s, $\mu\times\omega = \kappa$, 
so that $\mu = \kappa$.
Thus, $E=\{K_n\}_{n\prec\omega}$ satisfies the hypotheses of Prop. \ref{prop:Pi_0},
which delivers countably incomplete $\filt_0 = \Flt{E}$, as well as $\Pi_0$.
\end{proof}

\section{building a stock of large independent partitions of 
$\kappa$ \label{sec:stock-of-partitions}}

The goal of this section is Prop. \ref{prop:Pi_0}, which was used in the
proof of the main theorem \ref{thm:good-ultrafilter}.
A pair of clever lemmata are required.
\begin{lem}
\label{lem:collection-of-partitions}
There is a collection $\Pi^\prime$ of independent large partitions of $\kappa$ with
$\card{\Pi^\prime} = 2^\kappa$. 
\end{lem}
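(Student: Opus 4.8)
The plan is to use the classical construction of a large independent family (the Hausdorff/Engelking--Kar\l owicz trick): rather than building the partitions directly on $\kappa$, build them on a cleverly chosen auxiliary set of size $\kappa$, on which independence is manifest, and then transport everything along a bijection. Concretely, I would take as index set
\[
J \defeq \setbld{(e,g)}{e \in \Powfin(\kappa) \ \text{and}\ g:\Pow(e)\to\kappa}.
\]
Since $\kappa$ is infinite, $\card{\Powfin(\kappa)} = \kappa$, and for each finite $e$ there are only $\kappa$ functions $\Pow(e)\to\kappa$ (the domain $\Pow(e)$ being finite), so $\card{J} = \kappa\cdot\kappa = \kappa$. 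Fixing a bijection $\kappa\to J$, it then suffices to exhibit $2^\kappa$ independent large partitions of $J$.

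For each $A\subseteq\kappa$ I would define $F_A : J\to\kappa$ by $F_A(e,g) \defeq g(A\cap e)$, let $P_A$ be the partition of $J$ into the nonempty fibres of $F_A$, and set $\Pi' \defeq \setbld{P_A}{A\subseteq\kappa}$. The one substantive point is the \emph{separation property}: for pairwise distinct $A_1,\dots,A_n\subseteq\kappa$ and arbitrary $\xi_1,\dots,\xi_n\in\kappa$, the intersection $\bigcap_{k\le n} F_{A_k}^{-1}(\xi_k)$ is nonempty. Here the choice of $J$ pays off: choosing, for each pair $j<k$, an element of $\kappa$ lying in exactly one of $A_j$, $A_k$, and letting $e$ be the finite set of all those elements, the restrictions $A_1\cap e,\dots,A_n\cap e$ become pairwise distinct, so the prescription $g(A_k\cap e)\defeq\xi_k$ (and $g\defeq 0$ elsewhere) defines a legitimate $g:\Pow(e)\to\kappa$, and $(e,g)$ witnesses the intersection.

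Granting the separation property, the remaining claims are immediate. The case $n=1$ shows each $F_A$ is onto $\kappa$, so $P_A$ has exactly $\kappa$ cells and is large. The case $n=2$ shows $A\mapsto P_A$ is injective, so $\card{\Pi'} = 2^\kappa$. Finally, any polycell of $\Pi'$ is, by Definition \ref{def:partition}, an intersection of cells drawn from pairwise distinct members $P_{A_1},\dots,P_{A_n}$ of $\Pi'$ — hence of the form $\bigcap_{k\le n} F_{A_k}^{-1}(\xi_k)$ with the $A_k$ pairwise distinct — and so is nonempty; thus $\emptyset\notin\pcell{\Pi'}$, i.e. $\Pi'$ is independent. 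Transporting $\Pi'$ through the fixed bijection $\kappa\to J$ yields the required collection of partitions of $\kappa$. I expect the only real obstacle to be the choice of the index set $J$ itself; once $J$ is in hand, the separation property is a two‑line finitary argument and everything else is bookkeeping.
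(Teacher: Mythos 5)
Your proposal is correct and is essentially the same argument as the paper's: the same auxiliary index set of pairs (finite subset of $\kappa$, function assigning ordinals to its subsets), the same fibre partitions $P_A$, the same separation argument via symmetric differences, and the same bookkeeping for largeness, cardinality, and independence. If anything, your writing $g:\Pow(e)\to\kappa$ makes explicit what the paper's notation $r:s\to\kappa$ only implies (since $r$ is applied to subsets of $s$).
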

\begin{proof}
Define
  \begin{equation}
\Gamma = \{(s,r) \,|\, s\in \Powfin(\kappa), \,\, 
r:\, {s} \rightarrow \kappa \}.
\nonumber
  \end{equation}
Also, for each $J\subseteq \kappa$, the function
  \begin{equation}
f_J : \Gamma \rightarrow \kappa \,::\, (s,r) \mapsto r(s\cap J),
\nonumber
  \end{equation}
and corresponding partition of $\Gamma$
\begin{equation}
{P}_J^\Gamma = \setbld{f_J^{-1}(\gamma)}{\gamma\in\ran f_J}.
\nonumber
%\label{eq:partition-labelled-by-J}
\end{equation}
We show that for distinct $J_1,\ldots,J_n\in \Pow(\kappa)$,
and arbitrary $\gamma_1,\ldots,\gamma_n\in \kappa$,
there is a solution $\xi$ of the system
\begin{equation}
f_{J_i}(\xi) = \gamma_i, \quad 1\le i \le n.
\label{eq:system-of-eqs}
\end{equation}
This immediately implies that 
$\setbld{P_J^\Gamma}{J\in \Pow(\kappa)}$ comprises
$2^\kappa$ [by (\ref{eq:system-of-eqs} with $n=2$], 
independent [general $n$],
large [$n=1$] partitions of $\Gamma$.

To solve (\ref{eq:system-of-eqs}), note that for each pair $1\le i< j\le n$, 
there is $x_{ij}\in J_i\Delta J_j$,
so, with $t \defeq \{x_{ij}\,|\, 1\le i < j \le n\}$,
the subsets $K_i \defeq t\cap J_i$ are all distinct, for $1\le i\le n$.
Then, $\xi = (t,r)$ does the trick if $r(K_i) = \gamma_i$. 

Since $\card{\Gamma} = {\kappa}$,
there is a bijection $\sigma: \kappa \rightarrow \Gamma$.
Use it to pull the $P_J^\Gamma$'s back to $\kappa$, obtaining 
partitions $P_J \defeq \sigma^{-1}P_J^\Gamma$.
\begin{equation}
\Pi^\prime \defeq \setbld{P_J}{J\in \Pow(\kappa)}
\label{eq:family-of-partitions}
\end{equation}
is the promised collection.
\end{proof}

\begin{lem}\label{lem:grand-hotel}
If $\beta \leq \kappa$ and 
$f:\beta \rightarrow \Powtop(\kappa)$, 
then there is a disjoint $h \le f$.
\end{lem}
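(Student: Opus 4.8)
The plan is a ``Hilbert hotel'' recursion of length $\kappa$. The point is that each value $f(\alpha)$ has cardinality $\kappa$, so $f(\alpha)$ always has room to spare: from it one can extract $\kappa$ elements while still avoiding any previously committed set of fewer than $\kappa$ points. The only arithmetic needed is that, for $\beta\neq 0$, $\card{\beta\times\kappa} = \kappa$ (since $1\le\card{\beta}\le\kappa$), which keeps the number of commitments below $\kappa$ at every stage. I will in fact produce a disjoint $h\le f$ whose values are again large, matching the codomain $\Powtop(\kappa)$; that is also the version wanted later.

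If $\beta = 0$ there is nothing to do, so assume $\beta\neq 0$ and fix an enumeration $\beta\times\kappa = \setbld{(\alpha_\eta,\xi_\eta)}{\eta\prec\kappa}$. I would build an injection $c:\beta\times\kappa\rightarrow\kappa$ with $c(\alpha,\xi)\in f(\alpha)$ by recursion on $\eta$: having chosen $c$ on $\setbld{(\alpha_{\eta'},\xi_{\eta'})}{\eta'\prec\eta}$, pick
\[
c(\alpha_\eta,\xi_\eta)\in f(\alpha_\eta)\setminus\setbld{c(\alpha_{\eta'},\xi_{\eta'})}{\eta'\prec\eta},
\]
which is nonempty because $\card{f(\alpha_\eta)} = \kappa$ while the removed set has cardinality $\card{\eta} < \kappa$ ($\kappa$ being an initial ordinal). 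Then $h(\alpha)\defeq\setbld{c(\alpha,\xi)}{\xi\prec\kappa}$ is the desired value at $\alpha$.

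By construction $h(\alpha)\subseteq f(\alpha)$, so $h\le f$. Since $\xi\mapsto c(\alpha,\xi)$ is injective, $\card{h(\alpha)} = \kappa$, so $h$ indeed maps into $\Powtop(\kappa)$. And the $h(\alpha)$ are pairwise disjoint: if $\gamma\in h(\alpha)\cap h(\alpha')$, then $\gamma = c(\alpha,\xi) = c(\alpha',\xi')$ for some $\xi,\xi'$, whence $(\alpha,\xi) = (\alpha',\xi')$ by injectivity of $c$, in particular $\alpha = \alpha'$; equivalently, in the phrasing of Lemma \ref{lem:hat-map}, each $\gamma\prec\kappa$ lies in at most one $h(\alpha)$, which is exactly disjointness of $h$. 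The main temptation to resist is a length-$\card{\beta}$ recursion that greedily commits each $h(\alpha)$ in one go: a single $f(\alpha)$ could then be used up before the later indices are served. Interleaving the ``room index'' $\xi$ with the ``guest index'' $\alpha$ and charging the total work to $\card{\beta\times\kappa} = \kappa$ is what repairs this, and is the only real content of the argument.
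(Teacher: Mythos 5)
Your proof is correct and is essentially the paper's argument: the paper also transfers a well-order from $\kappa$ to $\kappa\times\kappa$ and builds an injection $g$ with $g(x\times\kappa)\subseteq f(x)$, using at each point the fact that fewer than $\kappa$ values have been committed while $\card{f(x)}=\kappa$; it merely phrases the recursion as a ``least pair not in the domain of any partial injection'' contradiction rather than an explicit stage-by-stage construction. Your observation that the output $h$ again lands in $\Powtop(\kappa)$ matches the paper's remark that the $h(x)$ have cardinality $\kappa$, which is what Proposition \ref{prop:Pi_0} needs.
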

\begin{proof}
By the axiom of Choice, there is a bijection between $\kappa$ and $\kappa\times\kappa$, 
inducing a (henceforth {\em the}) well-order on $\kappa\times\kappa$.
Let $I$ be the set of all {\em injective} functions from initial segments of 
$\kappa\times\kappa$ to $\kappa$ that map $x\times\kappa$ into $f(x)$,
ordered by extension ($g \le g^\prime$ iff $g^\prime$ agrees with $g$ on $\dom g$). 
We show that some $g^* \in I$ has $\dom g^* = \kappa\times\kappa$. For if not, there
is a least element $(x,\xi)$ not in the domain of any function in $I$, and a $g$ defined
on everything preceding $(x,\xi)$. But then $g(x\times\kappa)\subseteq f(x)$ has cardinality
{\em less than} $\kappa$, so an extension of $g$ can be defined at $(x,\xi)$. 
Contradiction.
The sets $h(x) \defeq g(x\times\kappa)\subseteq f(x)$ have 
cardinality $\kappa$ and are disjoint, as required. 
\end{proof}

\begin{prop}
\label{prop:Pi_0}
Let $E \subseteq \Powtop(\kappa)$ with $\card{E} \le \kappa$ 
have the finite intersection property.
Then, there is a set $\Pi$ of partitions of $\kappa$ with
$\card{\Pi} =2^\kappa$ and $\pcell{\Pi} \subseteq \con \Flt{E}$.
\end{prop}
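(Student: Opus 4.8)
The plan is to build $\Pi$ by transporting the raw independent family of Lemma~\ref{lem:collection-of-partitions} back onto $\kappa$ along a disjointified base for $\Flt E$, arranged so that no polycell of the result can miss any basic set. Replacing $E$ by its closure under finite intersections changes neither $\Flt E$ nor the bound $\card E\le\kappa$, so I may assume $E=\{F_\xi\}_{\xi<\mu}$ with $\mu\le\kappa$, closed under finite intersections, each $F_\xi$ large (as it is in the intended application). Then $y\in\con\Flt E$ iff $y\cap F_\xi\neq\emptyset$ for every $\xi$, so it suffices to produce $2^\kappa$ large partitions of $\kappa$ whose every polycell meets every $F_\xi$. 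Apply Lemma~\ref{lem:grand-hotel} to $\xi\mapsto F_\xi$ (a map $\mu\to\Powtop(\kappa)$ with $\mu\le\kappa$), whose proof in fact yields pairwise disjoint $H_\xi\subseteq F_\xi$ of cardinality $\kappa$; meeting $H_\xi$ forces meeting $F_\xi$, so it is enough to meet every $H_\xi$. Put $G\defeq\bigcup_{\xi<\mu}H_\xi$ and fix bijections $b_\xi:\kappa\to H_\xi$.

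Now take $\Pi'=\{Q_\nu\}_{\nu<2^\kappa}$, the independent large partitions of $\kappa$ from Lemma~\ref{lem:collection-of-partitions}, and for each $\nu$ form the partition $P_\nu$ of $\kappa$ whose cells are $C^\nu_D\defeq\bigcup_{\zeta<\mu}b_\zeta[D]$ for $D$ a cell of $Q_\nu$, with $\kappa\setminus G$ adjoined to one of them. Because the $b_\zeta$ are injections with pairwise disjoint ranges covering $G$, $P_\nu$ is genuinely a partition of $\kappa$ with $\card{Q_\nu}=\kappa$ cells (hence large); moreover $P_\nu$ determines $Q_\nu$ (restrict $P_\nu$ to some $H_\xi$ and pull back through $b_\xi$), so $\nu\mapsto P_\nu$ is injective and $\Pi\defeq\{P_\nu\}_{\nu<2^\kappa}$ has cardinality $2^\kappa$. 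Finally, for distinct $\nu_1,\ldots,\nu_k$, cells $D_i$ of $Q_{\nu_i}$, and any $\xi<\mu$, disjointness and injectivity of the $b_\zeta$ yield
\[
\Bigl(\bigcap_{i=1}^k C^{\nu_i}_{D_i}\Bigr)\cap H_\xi \;=\; b_\xi\Bigl[\bigcap_{i=1}^k D_i\Bigr],
\]
and $\bigcap_{i=1}^k D_i\neq\emptyset$ since it is an intersection of cells from the distinct partitions $Q_{\nu_1},\ldots,Q_{\nu_k}$ and $\Pi'$ is independent. Hence every polycell of $\Pi$ meets every $H_\xi$, so meets every $F_\xi$, so meets every member of $\Flt E$; that is, $\pcell\Pi\subseteq\con\Flt E$, as required.

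I do not expect a genuine obstacle: the real content is recognizing the division of labour --- Lemma~\ref{lem:grand-hotel} disjointifies a base for the filter, Lemma~\ref{lem:collection-of-partitions} supplies the combinatorial core, and the bijections $b_\xi$ splice the two together --- after which the verifications are routine. The points that need a little care all sit in the second paragraph: that the transported partitions remain large and pairwise distinct, that the discarded set $\kappa\setminus G$ absorbs harmlessly into a single cell without shrinking any polycell's trace on an $H_\xi$, and that the displayed identity is exactly the instance of the independence of $\Pi'$ that one needs.
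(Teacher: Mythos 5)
Your proof is correct and is essentially the paper's: the same disjointification of the finite-intersection closure of $E$ via Lemma~\ref{lem:grand-hotel}, spliced with the independent family of Lemma~\ref{lem:collection-of-partitions}; your family of bijections $b_\xi$ is exactly the inverse of the paper's single surjection $\phi:\kappa\to\kappa$ restricted to the disjoint sets, so the transported partitions coincide. You even flag the same point the paper silently assumes, namely that the finite intersections of $E$ are themselves large (true in the one application, a decreasing chain of large sets, though not forced by the stated hypotheses).
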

\begin{proof}
$\hat{E} \defeq \setbld{\cap I}{I\in\Powfin(E)} \subseteq \Powtop(\kappa)$ 
is a filter base for $\filt = \Flt{E}$.
Since $\card{\hat{E}} \le \kappa$,
Lemma \ref{lem:grand-hotel} guarantees subsets
$I_e\subseteq e \in \hat{E}$ 
such that $\{I_e\}_{e\in\hat{E}}$ is disjoint, 
and $\card{I_e} = \kappa$ so that there are
bijections $\phi_e:I_e \rightarrow \kappa$. Using them, define a surjection
\begin{equation}
\phi:\kappa \rightarrow \kappa :: \,
x \mapsto 
\begin{cases}
\phi_e(x), \quad \text{ if } x\in I_e \text{ for some } e
\\
0, \quad \text{otherwise},
\end{cases}  
\nonumber
\end{equation}
bijective on each $I_e$.
Now, pull back $\Pi^\prime$ from (\ref{eq:family-of-partitions}) via $\phi$ to obtain
\begin{equation}
{\Pi} = \setbld{\hat{P}_J}{J\in\Pow(\kappa)},\quad 
  \hat{P}_J \defeq \phi^{-1} P_J,
\nonumber
\end{equation}
with $\card{{\Pi}} = \card{\Pi^\prime} = 2^\kappa$.
Since $\Pi^\prime$ is independent,
$\phi\!\upharpoonright\! I_e$ is bijective,
and pullback by $\phi$ commutes with intersection, 
${\Pi}\!\upharpoonright\! I_e$ is independent for each $e$.
Thus, since every member of $\filt$ is a superset of some $I_e$,
$\pcell{\Pi}\subseteq \con \filt$.
\end{proof}
%%%%%%%%%%%%%%%%5
%\bibliographystyle{plain}
%\bibliography{nsa,good}

\end{document}